\documentclass[
                final,             
               onefignum,
               onetabnum
                        ]{siamart250211}
\usepackage{amsmath}
\usepackage{upref} 
\usepackage{amssymb}
\usepackage{breakcites}
\usepackage{xcolor}
\usepackage{array}
\usepackage{calc}
\usepackage{booktabs}
\setlength{\heavyrulewidth}{0.08pt}   
\setlength{\lightrulewidth}{0.05pt}   
\setlength{\cmidrulewidth}{0.03pt}    
\usepackage{siunitx}
\usepackage{float} 
\floatstyle{plain}
\usepackage{algorithm,algpseudocode}
\usepackage{caption}
\captionsetup[algorithm]{labelfont=bf, labelsep=colon}
\usepackage{graphicx}
\usepackage{url}
\usepackage{placeins} 
\usepackage{fancyhdr,substr}  
\usepackage{gitinfo}

\usepackage[english]{babel}

\usepackage[markup=defult]{changes}
\definechangesauthor[name={Awad}, color=red]{AA}

\ifpdf
  \DeclareGraphicsExtensions{.eps,.pdf,.png,.jpg}
\else
  \DeclareGraphicsExtensions{.eps}
\fi


\makeatletter
\def\@cite#1#2{[{#1\if@tempswa ,~#2\fi}]}
\makeatother
\headers{Exponential of a Block Triangular Matrix}{Awad H. Al-Mohy}
\title{A New Algorithm for Computing the Exponential of a Block Triangular Matrix
       \thanks{Version of \today
       \funding{This work was supported by the Deanship of Scientific Research at King Khalid University Research Groups Program (grant RGP. 1/318/45).
               }
              }
      }

\author{Awad H. Al-Mohy%
           \thanks{%
                   Department of Mathematics, King Khalid University, Abha,
                   Saudi Arabia
                   (\email{ahalmohy@kku.edu.sa}
                    ).
        }
}


\makeatletter
\def\argmin{\mathop{\operator@font argmin}}
\makeatother






\def\funm{\t{funm}}
\def\expm{\t{expm}}
\def\expmmp{\t{expm\_mp}}
\def\sylvester{\t{sylvester}}
\def\testmats{\texttt{testmats}}
\def\randn{\texttt{randn}}




\def\At{\widetilde{A}}
\def\Tt{\widetilde{T}}
\def\Ht{\widetilde{g}}

\def\dt{\widetilde{d}}

\def\refsol{\mathop{\mathrm{ref}}}

\def\C{\mathbb{C}}

\def\vec{\mathop{\mathrm{vec}}}

\def\and{\mathop{\mathrm{and}}}



\def\Htmd{\Ht@@'_{2m+1}}

\def\Pant{Pad\'e approximant}
\def\FD{Fr\'echet derivative}






\mathcode`@="8000 
{\catcode`\@=\active\gdef@{\mkern1mu}}


\def\be{backward error}

\def\alg{algorithm}

\def\e{eigenvalue}
\def\t#1{\texttt{\upshape #1}}
\def\eps{\epsilon}

\newcommand{\myfig}[2][12cm]{\includegraphics[width = #1]{fig/#2.eps}}


\newcounter{mylineno}
\makeatletter
\let\oldtabcr\@tabcr

\def\mynewline{\refstepcounter{mylineno}%
                \llap{\footnotesize\arabic{mylineno}\hspace{5pt}}%
               }

\gdef\@tabcr{\@stopline \@ifstar{\penalty%
            \@M \@xtabcr}\@xtabcr\mynewline}

\makeatother
\def\a{\alpha}

\def\D{\Delta}

\def\resp{respectively}

\def\Dhat{\widehat{D}}

\def\cn{condition number}

\def\tol{\mathrm{tol}}
\def\cond{\mathrm{cond}}

\def\vec{\mathrm{vec}}

\def\antidiag{\mathrm{antidiag}}
\def\dt{\mathrm{d}}
\def\1i{\mathrm{i}}
\def\e{\mathrm{e}}

\def\norm#1{\|#1\|}
\def\normt#1{\|#1\|_2}
\def\normF#1{\|#1\|_F}

\def\normi#1{\|#1\|_1}

\def\nbyn{n \times n}

\def\C{\mathbb{C}}
\def\R{\mathbb{R}}

\def\cL{\mathcal{D}}

\let\oldref\ref
\def\ref#1{{\normalfont\oldref{#1}}}
\def\eqref#1{{\normalfont(\oldref{#1})}}




\newcounter{exprmt}
\def\Experiment{Experiment}

               {\begin{list}{\indent\emph{\Experiment} {\upshape\arabic{exprmt}.}}%
                {\usecounter{exprmt}
                \setlength{\leftmargin}{\rightmargin}
                \setlength{\labelwidth}{\leftmargin}
                \addtolength{\labelwidth}{-\labelsep}
                \setlength{\topsep}{0in}
                \setlength{\itemsep}{0pt}
                \setlength{\listparindent}{\parindent}%
               }}%
               {\end{list}}
\mathchardef\Gamma="7100 \mathchardef\Delta="7101
\mathchardef\Theta="7102 \mathchardef\Lambda="7103
\mathchardef\Xi="7104 \mathchardef\Pi="7105 \mathchardef\Sigma="7106
\mathchardef\Upsilon="7107 \mathchardef\Phi="7108
\mathchardef\Psi="7109 \mathchardef\Omega="710A

\newsiamthm{myalg}{\textbf{Algorithm}}
\begin{document}
\maketitle
\vspace{0.5cm}

\begin{center}
    \textit{Dedicated to the memory of Nick Higham}
\end{center}

\vspace{0.5cm}
\begin{abstract}
The exponential of block triangular matrices arises in a wide range of scientific computing applications, including exponential integrators for solving systems of ordinary differential equations, Hamiltonian systems in control theory, sensitivity analysis, and option pricing in finance.
We propose a novel algorithm exploiting the block triangular structure for simultaneously computing the exponentials of the diagonal blocks and the off-diagonal block of the matrix exponential without direct involvement of the full block matrix in the computations. This approach generalizes the work of Al-Mohy and Higham [\emph{SIAM J. Matrix Anal. Appl.}, 30 (2009), pp. 1639–1657] on the Fr\'echet derivative of the matrix exponential. The generalization is established through a linear operator framework, facilitating efficient evaluation schemes and rigorous backward error analysis.
The algorithm employs the scaling and squaring method using diagonal Pad\'e approximants with algorithmic parameters selected based on the backward error analysis. A key feature is that the selection of the scaling parameter relies solely on the maximal norm of the diagonal blocks with no dependence on the norm of the off-diagonal block.
Numerical experiments confirm that the proposed algorithm consistently outperforms existing algorithms in both accuracy and efficiency, making it a preferred choice for computing the matrix exponential of block triangular matrices.

\end{abstract}

\begin{keywords}
matrix functions, matrix exponential, Fr\'echet derivative, exponential integrators, Hamiltonian matrix, option pricing models, Pad\'e approximants, block triangular matrix
\end{keywords}
\begin{MSCcodes}
15A60, 65F30, 65F60
\end{MSCcodes}
\section{Introduction}
Matrix functions play a fundamental role in many scientific and engineering disciplines, as they provide a structured approach to solving complex problems. They have broad applications in fields such as control theory, quantum mechanics, network analysis, and machine learning \cite{ehh08,high:FM,hial10}. As a result, matrix functions have attracted substantial research attention, leading to significant advances in theoretical understanding and computational methods.
In particular, the matrix exponential is one of the most extensively studied matrix functions, both from theoretical and computational perspectives. However, at the algorithmic level, the computation of the matrix exponential for some structured problems has received comparatively less attention, despite its importance in applications where exploiting matrix structure can lead to substantial gains in efficiency and accuracy.
In this context, and motivated by applications, we propose a specialized algorithm for computing the exponential of block triangular matrices that fully exploits the triangular structure without explicitly forming the full matrix.

The contents of this paper are organized as follows. First, for a given analytic function $f$ and matrices $A$ and $B$ of appropriate dimensions, we define a linear operator via the Cauchy integral and derive its fundamental properties. Motivated by the potential of this operator to facilitate the computation of the exponential of block triangular matrices, we highlight several applications. In section~\ref{sect.eval.exp}, we apply the operator to the optimally formulated evaluation schemes for the diagonal Pad\'e approximants to the exponential function, thereby obtaining optimal evaluation schemes for the operator itself, which, in this setting, represents the off-diagonal block of the matrix exponential. This approach leads to significant computational savings and improved efficiency. The scaling and squaring method is then employed and analyzed.
We present a rigorous backward error analysis in section~\ref{sect.be.anal}. In section~\ref{sect.algs}, we describe our proposed algorithm and review the algorithm of Kenney and Laub. Numerical experiments are provided in section~\ref{sect.num}. Finally, we draw some concluding remarks in section~\ref{sect.conc}. and raise some open questions.

Unless otherwise stated, the matrix norms used in this paper are assumed to be consistent.
\begin{definition}
\label{def1}
Let $f$ be an analytic function on a simply connected open set $\Omega\subset\C$ containing the union of the spectra of $A\in\C^{n\times n}$ and $B\in\C^{d\times d}$. For every $E\in\C^{n\times d}$ we define the operator $\cL_f(A,B,\cdot):\C^{n\times d}\to\in\C^{n\times d}$ as
\begin{equation}\label{Lab.operator}
 \cL_f(A,B,E) = \frac{1}{2\pi\1i}\int_\Gamma f(z)(zI_n-A)^{-1}E(zI_d-B)^{-1}\dt z,
\end{equation}
where $\Gamma\subset\Omega$ is a simple closed rectifiable contour that strictly encloses the union of the spectra of $A$ and $B$ and winds once around it in the counterclockwise direction.
\end{definition}
Obviously $\cL_f(A,B,\cdot)$ is linear and uniquely determined by the function $f$ and the ordered pair $(A,B)$.
This definition follows immediately from the Cauchy integral definition \cite[Def.~1.11]{high:FM} of the matrix function
$f\left(\begin{bmatrix}
         A  &  E \\ 0 & B
\end{bmatrix}\right)$. That is,
\begin{eqnarray}
\nonumber
 f\left(\begin{bmatrix}
         A  &  E \\ 0 & B
\end{bmatrix}\right) &=& \frac{1}{2\pi\1i}\int_\Gamma f(z)
\begin{bmatrix}
         zI_n-A  &  -E \\ 0 & zI_d-B
\end{bmatrix}^{-1} \dt z\\
\nonumber
                     &=& \frac{1}{2\pi\1i}\int_\Gamma f(z)
\begin{bmatrix}
         (zI_n-A)^{-1}  &  (zI_n-A)^{-1}E(zI_d-B)^{-1}  \\ 0 & (zI_d-B)^{-1}
\end{bmatrix} \dt z\\ &=&
\begin{bmatrix}
         f(A)  &  \cL_f(A,B,E) \\ 0 & f(B)
\end{bmatrix}.
\label{block.def}
\end{eqnarray}
 The next lemma lists several properties and rules of this operator.
\begin{lemma}
\label{lem.prop}
Let \( f \) and \( g \) be matrix functions satisfying the assumptions of Definition \ref{def1}. Then we have:

\begin{enumerate}
    \item \( \cL_f(A, A, E) = L_f(A, E) \), where \( L_f(A, \cdot) \) is the \FD\ operator of the matrix function \( f \) \textnormal{\cite[Eq.~(3.6)]{high:FM}}.
    
    \item \( \cL_f(A, B, E) = P_1 \cL_f(T_1, T_2, P_1^{-1} E P_2) P_2^{-1} \), where \( A = P_1 T_1 P_1^{-1} \) and \( B = P_2 T_2 P_2^{-1} \).
    
    \item \( \cL_f(A, B, E) = 0 \) if \( f \) is constant.
    
    \item \( \cL_{f+g}(A, B, E) = \cL_f(A, B, E) + \cL_g(A, B, E) \) \hfill \textnormal{(sum rule)}.
    
    \item \( \cL_{fg}(A, B, E) = f(A) \cL_g(A, B, E) + \cL_f(A, B, E) g(B) \) \hfill \textnormal{(product rule)}.
    
    \item \( \cL_{f \circ g}(A, B, E) = \cL_f(g(A), g(B), \cL_g(A, B, E)) \), \hfill \textnormal{(chain rule)} \label{chain.rule}\\
    assuming further that the union of the spectra of \( g(A) \) and \( g(B) \) is strictly enclosed in \( \Gamma \).
\end{enumerate}
\end{lemma}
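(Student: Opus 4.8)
The plan is to prove each of the six properties by exploiting the block-matrix identity \eqref{block.def}, which reduces every statement about $\cL_f$ to the corresponding (and typically well-known) statement about matrix functions evaluated at the block triangular matrix $M_E := \left[\begin{smallmatrix} A & E \\ 0 & B\end{smallmatrix}\right]$. This is the unifying idea: rather than manipulate the Cauchy integral \eqref{Lab.operator} directly for each rule, I would read off $\cL_f(A,B,E)$ as the $(1,2)$ block of $f(M_E)$ and then invoke standard functional-calculus facts.

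First, for property~1, setting $B=A$ and comparing the $(1,2)$ block of $f\left(\left[\begin{smallmatrix} A & E \\ 0 & A\end{smallmatrix}\right]\right)$ with the standard formula $f\left(\left[\begin{smallmatrix} A & E \\ 0 & A\end{smallmatrix}\right]\right) = \left[\begin{smallmatrix} f(A) & L_f(A,E) \\ 0 & f(A)\end{smallmatrix}\right]$ \cite[Eq.~(3.6)]{high:FM} gives the claim immediately. For property~2, I would conjugate $M_E$ by $\mathrm{diag}(P_1,P_2)$: since $\mathrm{diag}(P_1,P_2)^{-1} M_E\, \mathrm{diag}(P_1,P_2) = \left[\begin{smallmatrix} T_1 & P_1^{-1}EP_2 \\ 0 & T_2\end{smallmatrix}\right]$ and matrix functions commute with similarity, comparing $(1,2)$ blocks yields $\cL_f(A,B,E) = P_1\,\cL_f(T_1,T_2,P_1^{-1}EP_2)\,P_2^{-1}$. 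Property~3 is immediate from \eqref{Lab.operator} since a constant $f\equiv c$ gives $\frac{c}{2\pi\1i}\int_\Gamma (zI_n-A)^{-1}E(zI_d-B)^{-1}\,\dt z$, and this integrand is the $(1,2)$ block of $c\,(zI-M_E)^{-1}$, whose contour integral is $c I_{n+d}$ with zero off-diagonal block; alternatively it follows directly from the linearity already noted and the fact that $f(M_E) = cI$. Property~4 is just linearity in $f$ of the integral \eqref{Lab.operator}, or equivalently of the block formula.

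The two substantive rules are the product rule (property~5) and the chain rule (property~6). For the product rule I would use $(fg)(M_E) = f(M_E)g(M_E)$ and multiply out the block-triangular factors: the $(1,2)$ block of the product $\left[\begin{smallmatrix} f(A) & \cL_f(A,B,E) \\ 0 & f(B)\end{smallmatrix}\right]\left[\begin{smallmatrix} g(A) & \cL_g(A,B,E) \\ 0 & g(B)\end{smallmatrix}\right]$ is exactly $f(A)\cL_g(A,B,E) + \cL_f(A,B,E)g(B)$, which must equal $\cL_{fg}(A,B,E)$. For the chain rule, I would first note that $g(M_E) = \left[\begin{smallmatrix} g(A) & \cL_g(A,B,E) \\ 0 & g(B)\end{smallmatrix}\right]$, so $g(M_E)$ is itself block triangular with diagonal blocks $g(A)$, $g(B)$ and off-diagonal block $\cL_g(A,B,E)$; applying \eqref{block.def} to the outer function $f$ and the block matrix $g(M_E)$ gives that the $(1,2)$ block of $f(g(M_E)) = (f\circ g)(M_E)$ equals $\cL_f\big(g(A), g(B), \cL_g(A,B,E)\big)$. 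Since $(f\circ g)(M_E)$ has $(1,2)$ block $\cL_{f\circ g}(A,B,E)$ by \eqref{block.def}, the rule follows. The one point requiring care — and the main obstacle — is the contour/analyticity bookkeeping in the chain rule: one must ensure $f$ is analytic on a simply connected region containing the spectra of $g(A)$ and $g(B)$ (which is exactly the extra hypothesis stated) and that the contour $\Gamma$ for the outer integral can be chosen to enclose these spectra, so that all the Cauchy integral representations are simultaneously valid; I would state this explicitly and otherwise keep the argument at the level of block-matrix algebra.
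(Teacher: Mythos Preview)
Your proof is correct, but for points 3--6 it takes a different route from the paper. You work directly with the block identity \eqref{block.def}: for the product rule you use $(fg)(M_E)=f(M_E)g(M_E)$ and read off the $(1,2)$ block of the product of two block-triangular matrices; for the chain rule you observe that $g(M_E)$ is again block triangular and apply \eqref{block.def} a second time with $f$ as the outer function. The paper instead invokes the identity $f(M_E)=f(M_0)+L_f(M_0,N_E)$ (with $M_0=\diag(A,B)$ and $N_E=\bigl[\begin{smallmatrix}0&E\\0&0\end{smallmatrix}\bigr]$) to obtain the relation \eqref{L.block.form}, and then inherits the sum, product, and chain rules wholesale from the corresponding rules for the Fr\'echet derivative \cite[Thms.~3.2--3.4]{high:FM}. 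Your argument is more self-contained and needs only the elementary facts $(fg)(X)=f(X)g(X)$ and $(f\circ g)(X)=f(g(X))$, whereas the paper's approach trades that directness for a useful byproduct: the identity \eqref{L.block.form} linking $\cL_f$ to a Fr\'echet derivative of a block-diagonal matrix, which is exactly what is exploited later in the proof of Theorem~\ref{Thm.DE}.
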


\begin{proof}
The first point follows from \cite[Prob.~3.9]{high:FM}. The second point follows directly from the formula \ref{Lab.operator}. For the other points, we use the formula \cite[Lem.~1.1]{kela98}, \cite[Thm.~4.12]{high:FM}

\[
f\left(\begin{bmatrix}
         A  &  E \\ 0 & B
\end{bmatrix}\right)= f\left(\begin{bmatrix}
         A  &  0 \\ 0 & B
\end{bmatrix}\right)+L_f\left(\begin{bmatrix}
         A  &  0 \\ 0 & B
\end{bmatrix},\begin{bmatrix}
         0  &  E \\ 0 & 0
\end{bmatrix}\right).
\]
Aligning this formula with \eqref{block.def} yields
\begin{equation}\label{L.block.form}
  L_f\left(\begin{bmatrix}
         A  &  0 \\ 0 & B
\end{bmatrix},\begin{bmatrix}
         0  &  E \\ 0 & 0
\end{bmatrix}\right) = \begin{bmatrix}
         0  &  \cL_f(A,B,E) \\ 0 & 0
\end{bmatrix}.
\end{equation}
Therefore, the sum, product, and chain rules of the operator $\cL_f(A,B,\cdot)$ are directly inherited from those of the \FD\ operator \cite[Thm.'s~3.2--3.4]{high:FM}.
\end{proof}
These properties of the operator $\cL_f(A,B,\cdot)$ will simplify and facilitate our analysis. Similar to the work of Al-Mohy and Higham \cite{alhi09} on the \FD\ of the matrix exponential, we can extend a given numerical algorithm for a matrix function $f$ to an algorithm for simultaneously computing $f(A)$, $f(B)$, and $\cL_f(A,B,E)$. Al-Mohy and Higham \cite[Thm.~4.1]{alhi09} show that a given efficient evaluation scheme for a polynomial yields an efficient evaluation scheme for its \FD. We reach the same conclusion for $\cL_f(A,B,E)$ in view of Lemma \ref{lem.prop}. This will be demonstrated in the evaluation schemes for $\cL_{\exp}(A,B,E)$ in section \ref{sect.eval.exp}.

A significant application arises in exponential integrators, a class of numerical methods used for solving systems of ODEs, particularly those involving stiff or highly oscillatory problems:
\begin{eqnarray}\label{sys.odes}
    x@@'(t) = A@x(t)+h(t,x(t)), \quad x(t_0) = x_0,\quad t\ge t_0,
\end{eqnarray}
where $x(t)\in\C^n$, $A\in\C^{\nbyn}$, and $h$ is a nonlinear vector
function. Numerical solution schemes for such problems involve computing a linear combination of the form
\begin{equation}\label{phi.lin.comb}
\varphi_0(A)w_0+\varphi_1(A)w_1+\dots+\varphi_p(A)w_p,
\end{equation}
where
\[
\varphi_0(A)=\e^A,\quad \varphi_j(A)=A\varphi_{j+1}(A)+\frac{1}{j!},\quad j\ge0
\]
and $w_j$, $j=0\colon p$, are vectors that correspond to an integration method. For a comprehensive survey of the exponential integrators, see Hochbruck and Ostermann~\cite{hoos10}. In view of \cite[Thm.~2.1]{alhi11} and the block formula \eqref{block.def}, the linear combination \eqref{phi.lin.comb} satisfies
\[
\sum_{j=0}^{p}\varphi_j(A)w_j = \e^Aw_0 + 
\cL_{\exp}(A,J_p(0),W)e_p,
\]
where $J_p(0)$ is the Jordan block of size $p\times p$  associated with the zero eigenvalue, $W=[w_p,w_{p-1},\dots,w_1]\in\C^{n\times p}$, 
and $e_p$ denotes the
last column of the identity matrix $I_p$.

In control theory, Hamiltonian matrices frequently arise in the analysis of linear quadratic regulator (LQR) problems. Consider the Riccati matrix differential equation
\begin{equation*}
 X'(t)+X(t)A+A^TX(t)+X(t)RX(t)-C=0,
\end{equation*}
where $A$, $R$, and $C$ are constant matrices in $\R^{\nbyn}$, with $R$ and $C$ being symmetric positive semidefinite. This differential equation is closely related to the linear system~\cite{reid72}
\begin{equation*}
  \frac{\dt}{\dt t}\begin{bmatrix}
         u(t) \\ v(t)
\end{bmatrix} =\begin{bmatrix}
        A  &  R \\ C & -A^T
\end{bmatrix}\begin{bmatrix}
         u(t) \\ v(t)
\end{bmatrix}=:\At\begin{bmatrix}
         u(t) \\ v(t)
\end{bmatrix}.
\end{equation*}
The coefficient matrix $\At$ is \emph{Hamiltonian}, meaning that $J\At=(J\At)^T$, where $J=\antidiag(I_n,-I_n)$. The matrix exponential $\e^{t\At}$ arises in the solution of this system. A fundamental property of Hamiltonian matrices is the existence of
a structure-preserving unitary similarity transformation, known as the \emph{Real Schur-Hamiltonian decomposition} \cite[Thm.~5.1]{pavl81}, reducing $\At$ to a block upper triangular Hamiltonian matrix
\begin{equation}\label{tri.Ham}
 \Tt:=\begin{bmatrix}
        T  &  H \\ 0 & -T^T
\end{bmatrix},
\end{equation}
where $T$ is upper quasi-triangular with
eigenvalues in the left half plane and $H$ is symmetric. It follows that
\begin{equation*}
  \e^{\Tt} = \begin{bmatrix}
        F  &  D \\ 0 & F^{-T}
\end{bmatrix},
\end{equation*}
where $F=\e^T$ and $D=\cL_{\exp}(T,-T^T,H)$. Since $H$ is orthogonally diagonalizable,
$H=Q\Lambda Q^T$, the second property of Lemma~\ref{lem.prop} can be used to write
\[
D=Q@\cL_{\exp}(T,-T^T,\Lambda)@Q^T.
\]
Thus, Algorithm \ref{alg.expmF} below can be readily \emph{optimized} to compute $\cL_{\exp}(T,-T^T,\Lambda)$ efficiently.

Another interesting application of the exponential of block triangular matrices arises in option pricing models based on polynomial diffusions, a class of stochastic processes frequently used in mathematical finance to model the evolution of financial variables \cite{fila16}. In this context, Kressner, Luce, and Statti \cite{kls17} observe that exponentials of a sequence of recursively nested block upper triangular matrices need to be computed. Specifically, a sequence $\exp(G_0)$, $\exp(G_1)$, $\exp(G_2)$, $\dots$, where
\begin{equation}\label{G.seq}
  G_n=\begin{bmatrix}
        G_{n-1}  &  E_n \\ 0 & G_{n,n}
\end{bmatrix}, \quad G_0 = G_{0,0},
\end{equation}
with $G_{0,0}$ being an initial square matrix that serves as the starting point of the recursion. Defining the sequence $F_n=\exp(G_n)$ and $F_{n,n}=\exp(G_{n,n})$, the matrix 
$F_n$ can be computed recursively as
\begin{equation*}
  F_n=\begin{bmatrix}
        F_{n-1}  &  \cL_{\exp}(G_{n-1},G_{n,n},E_n) \\ 0 & F_{n,n}
\end{bmatrix}
\end{equation*}
using Algorithm \ref{alg.expmF}, which simultaneously computes all three blocks. 

Finally, a possible application that would enhance research in the computation of matrix functions is to use $\cL_f$ to compute the matrix function itself. Given an $\nbyn$ matrix, we can transform it to an upper triangular matrix $T$ using Schur decomposition. Thus for any index $1\le k\le n-1$, we have
\begin{equation}\label{part.k}
 f(T) = \begin{bmatrix}
         f(A_k)  &  \cL_f(A_k,B_k,E_k) \\ 0 & f(B_k)
\end{bmatrix},
\end{equation}
where $A_k=T(1\colon k,1\colon k)$, $B_k=T(k+1\colon n,k+1\colon n)$, and $E_k=T(1\colon k,k+1\colon n)$.

Although $\cL_f(A,B,E)$ can be computed directly by extracting the (1,2) block of the left-hand side matrix in \eqref{block.def}, increasing the size of the problem is computationally and storage-wise inefficient. We will discuss some disadvantages of this approach below. Therefore, having a specialized algorithm for evaluating the operator $\cL_f(A,B,E)$ is highly valuable.

The remainder of this paper is devoted to the computation of $\cL_{\exp}$.
\section{ Evaluation schemes for $\cL_{\exp}(A,B,E)$}
\label{sect.eval.exp}
The exponential function is entire over the complex plane, thus for any matrices $A$, $B$, and $E$ of appropriate matching sizes, the operator $\cL_{\exp}(A, B, \cdot)$ always exists, and the matrix $\cL_{\exp}(A, B, E)$ satisfies the Sylvester equation  
\[
AX - XB = \e^A E - E \e^B.
\] 
However, the converse is not true in general, since the Sylvester equation has a unique solution if and only if the spectra of $A$ and $B$ are disjoint. Even in this case, the accuracy of the computed solution depends critically on the separation between the spectra; that is, on how well-conditioned the problem is \cite[Ch.~9]{high:FM}. Due to this classical limitation, we adopt a different approach that imposes no restrictions on the matrices.

In this section we generalize the algorithm of Al-Mohy and Higham  \cite[Alg.~6.4]{alhi09} for \emph{simultaneously} computing the matrix exponential and
its \FD\ to an algorithm for simultaneously computing the matrix exponentials $\e^A$ and $\e^B$ and $\cL_{\exp}(A,B,E)$ using the scaling and squaring method based on the diagonal \Pant s, $r_m(z)=p_m(z)/q_m(z)$, for the exponential function $\e^z$. The cost analysis of Higham \cite{high05e,high09e}
suggests the optimal degrees for the \Pant s are the odd degrees
3, 5, 7, 9, and 13.
For $m=3, 5, 7, 9$, the polynomial
$p_m(z) = \sum_{i=0}^m c_i z^i$ can be written as
\begin{eqnarray}
     p_m(z) =
     z\sum_{k=0}^{(m-1)/2}c_{2k+1}z^{2k}+\sum_{k=0}^{(m-1)/2}c_{2k}z^{2k}
     =: u_m(z)+v_m(z).
   \label{p-odd-eval}
\end{eqnarray}
Therefore, $q_m(z) = -u_m(z)+v_m(z)$ because $p_m(-z) =
q_m(z)$ and
\[
    \cL_{p_m} =  \cL_{u_m} + \cL_{v_m}, \qquad
    \cL_{q_m} = -\cL_{u_m} + \cL_{v_m},
\]
where $\cL_{u_m}(A,B,E)$ and $\cL_{v_m}(A,B,E)$ are obtained by applying the sum and product rules of the operator on $u_m$ and $v_m$, \resp. This yields
\begin{eqnarray}
   \cL_{u_m}(A,B,E) &=& A\sum_{k=1}^{(m-1)/2}c_{2k+1}M_{2k}
                      +E\sum_{k=0}^{(m-1)/2}c_{2k+1}B^{2k}
     \label{sch_L_u}\\
   \cL_{v_m}(A,B,E) &=& \sum_{k=1}^{(m-1)/2}c_{2k}M_{2k}.
     \label{sch_L_v}
\end{eqnarray}
The matrix $M_\ell = \cL_{x^\ell}(A,B,E)$ is evaluated using the recurrence relation
\begin{equation}
 M_\ell = A^tM_r+M_tB^r, \quad M_1=E,
\end{equation}
where $t$ and $r$ are positive integers such that $\ell=t+r$, resulting from applying the product rule to $z^\ell=z^tz^r$.

For $m=13$, the splitting of $p_{13}(z) = u_{13}(z)+v_{13}(z)$ into odd and even terms can be efficiently calculated \cite[sect.~6]{alhi09} as
\begin{eqnarray}
  &    u_{13}(z) = zw(z), \quad w(z)=z^6w_1(z)+w_2(z),
      \quad
      v_{13}(z) = z^6y_1(z)+y_2(z), &\nonumber\\
  &
  \begin{array}{cc} \label{m13.scheme}
  w_1(z) = c_{13}z^6+c_{11}z^4+c_9z^2,  &
  w_2(z) = c_7z^6+c_5z^4+c_3z^2+c_1,\\
  y_1(z) = c_{12}z^6+c_{10}z^4+c_8z^2, &
  y_2(z) = c_6z^6+c_4z^4+c_2z^2+c_0.
  \end{array}
  &
\end{eqnarray}
Applying the operator $\cL$ on theses schemes yields
\begin{eqnarray*}
   \cL_{u_{13}}(A,B,E) &=& A\cL_{w}(A,B,E) + Ew(B),\\
   \cL_{v_{13}}(A,B,E) &=& A^6\cL_{y_{1}}(A,B,E)+M_6y_1(B) + \cL_{y_{2}}(A,B,E),
\end{eqnarray*}
where
\begin{eqnarray*}
  \cL_{w}(A,B,E) &=& A^6\cL_{w_{1}}(A,B,E) + M_6w_1(B) + \cL_{w_{2}}(A,B,E),\\
  \cL_{w_{1}}(A,B,E) &=& c_{13}M_6+c_{11}M_4+c_9M_2,\\
  \cL_{w_{2}}(A,B,E) &=& c_7M_6+c_5M_4+c_3M_2,      \\
  \cL_{y_{1}}(A,B,E) &=& c_{12}M_6+c_{10}M_4+c_8M_2,\\
  \cL_{y_{2}}(A,B,E) &=& c_6M_6+c_4M_4+c_2M_2.
\end{eqnarray*}
Then
$\cL_{p_{13}}=\cL_{u_{13}}+\cL_{v_{13}}$ and
$\cL_{q_{13}}=-\cL_{u_{13}}+\cL_{v_{13}}$. We finally solve for
$r_m(A)$, $r_m(B)$, and $\cL_{r_m}(A,B,E)$ the equations
\begin{eqnarray}
     (-u_m+v_m)(A)r_m(A) &=& (u_m+v_m)(A),\nonumber
     \\
     (-u_m+v_m)(B)r_m(B) &=& (u_m+v_m)(B),\nonumber
     \\
     \qquad\quad
    (-u_m+v_m)(A)\cL_{r_m}(A,B,E) &=& \label{Lrd}
    (\cL_{u_m}+\cL_{v_m})(A,B,E)\\
     & +& @@(\cL_{u_m}-\cL_{v_m})(A,B,E)r_m(B).\nonumber
\end{eqnarray}
Observe that we can obtain $\cL_{r_m}(A,B,E)$ by solving the multiple left-hand
side linear system
\begin{eqnarray}
\cL_{r_m}(A,B,E)(-u_m+v_m)(B) &=& \label{L.H.S.Lrd}
    (\cL_{u_m}+\cL_{v_m})(A,B,E)\\
     &+& r_m(A)(\cL_{u_m}-\cL_{v_m})(A,B,E),\nonumber
\end{eqnarray}
which results from applying the product rule of the operator $\cL$ on the equation $r_m(z)q_m(z)=p_m(z)$ instead of $q_m(z)r_m(z)=p_m(z)$ that produces
\eqref{Lrd}. Thus, we can obtain $\cL_{r_m}(A,B,E)$ by solving the easiest system. As the exponential function is well-approximated by \Pant s near zero, if either $A$ or $B$ has a large spectral radius,
then the scaling and squaring method is used to reduce the spectral radii. This ensures that $r_m(2^{-s}A)^{2^s}$ and $r_m(2^{-s}B)^{2^s}$ are good approximations to $\e^A$ and $\e^B$, \resp, for suitably chosen scaling parameter $s$. Both $r_m(2^{-s}A)^{2^s}$ and $r_m(2^{-s}B)^{2^s}$ can be computed by repeated squaring. 

To approximate $\cL_{\exp}(A,B,E)$, we apply the operator $\cL$ to the approximation $r_m(2^{-s}z)^{2^s}\approx \e^z$ using the chain rule with $f(z)=z^{2^s}$ and $g(z)= r_m(2^{-s}z)$ in point \ref{chain.rule} of Lemma \ref{lem.prop}. Therefore,
\begin{equation}
\label{cL.approx}
\cL_{z^{2^s}}\bigl(X_0,Y_0,D_0\bigr)\approx
\cL_{\exp}(A,B,E),
\end{equation}
where
$X_0 = r_m(2^{-s}A)$, $Y_0 = r_m(2^{-s}B)$, and $D_0=\cL_{r_m}(2^{-s}A,2^{-s}B,2^{-s}E)$,
which can all be computed using the evaluation scheme above. To recover the left hand side of \eqref{cL.approx} starting from the initial approximations $X_0$, $Y_0$, and $D_0$, let 
\[
 X_i = X_0^{2^i},\quad Y_i = Y_0^{2^i},\quad D_i=\cL_{z^{2^i}}\bigl(X_0,Y_0,D_0\bigr),\quad i=1\colon s. 
\]
Applying the product rule of the operator to the equation
$z^{2^i}= z^{2^{i-1}}z^{2^{i-1}}$ yields
\[
D_i = X_0^{2^{i-1}}D_{i-1} + D_{i-1}Y_0^{2^{i-1}}.
\]
This leads to the recurrence relation
\begin{equation}
\label{Li-exp}
\begin{aligned} 
  D_{i+1} &= X_i D_i + D_i @@Y_i, \\
  X_{i+1} &= X_i^2, \qquad Y_{i+1}@@=@@Y_i^2,\qquad   i=0\colon s-1,  
\end{aligned}
\end{equation}
from which $X_s\approx\e^A$, $Y_s\approx\e^B$, and $D_s\approx\cL_{\exp}(A,B,E)$.

In the next section, we conduct a backward error analysis for the problem and show how to select the scaling parameter $s$.

\section{Backward error analysis}
\label{sect.be.anal}
We analyze the backward error resulting from
the approximations $X_s\approx\e^A$, $Y_s\approx\e^B$, and $D_s\approx\cL_{\exp}(A,B,E)$ using the recurrence \eqref{Li-exp}. We recall the backward error result from~\cite[sect.~3]{alhi09a}.
\begin{theorem}
Let
\begin{equation}\label{Omega-m}
  \Omega^{(n)}_m = \{\,Z\in\C^{\nbyn} : \rho( \e^{-Z} r_m(Z) - I ) < 1,\quad
  \rho(Z) < \mu_m \,\},
\end{equation}
where $\mu_m = \min\{\,|z| : q_m(z) = 0 \,\}$ and $\rho$ denotes the spectral radius. Then the functions
\begin{equation}
      g_{2m+1}(X) = \log( \e^{-X} @r_m(X) )
      \label{h2mp1}
\end{equation}
are defined for all $X\in\Omega^{(n)}_m$, where $\log$ is the
principal matrix logarithm, and 
\[
r_m(X)
= e^{X+g_{2m+1}(X)},\quad X\in\Omega^{(n)}_m.
\]
\end{theorem}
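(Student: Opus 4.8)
The plan is to verify that every object named in the statement is well defined for $X\in\Omega^{(n)}_m$ and then to read the identity $r_m(X)=\e^{X+g_{2m+1}(X)}$ directly off the defining property of the principal logarithm; the argument is essentially a bookkeeping of which spectral‑radius hypothesis in the definition of $\Omega^{(n)}_m$ controls which domain of analyticity.

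First I would check that $r_m(X)$, and hence $\e^{-X}r_m(X)$, makes sense. Writing $r_m=p_m/q_m$, the eigenvalues of $q_m(X)$ are the numbers $q_m(\lambda)$ with $\lambda\in\Lambda(X)$; since $\rho(X)<\mu_m=\min\{z:q_m(z)=0,\ z\neq z\}$—that is, $\rho(X)<\min\{|z|:q_m(z)=0\}$—no such $\lambda$ is a zero of $q_m$, so $q_m(X)$ is nonsingular and $r_m(X)=p_m(X)q_m(X)^{-1}$ is a well-defined primary matrix function of $X$. Because $\e^{-X}$ is entire in $X$, the matrix $\e^{-X}r_m(X)$ is again a primary matrix function of $X$; in particular it commutes with $X$.

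Next I would show that the principal logarithm of $\e^{-X}r_m(X)$ exists. The scalar function $h(z)=\e^{-z}r_m(z)$ is analytic on a neighbourhood of $\Lambda(X)$ (again because $\rho(X)<\mu_m$), so by the spectral mapping theorem the eigenvalues of $\e^{-X}r_m(X)$ are precisely the numbers $h(\lambda)$, $\lambda\in\Lambda(X)$. The hypothesis $\rho(\e^{-X}r_m(X)-I)<1$ then says that each of these lies in the open disc $\{z\in\C:|z-1|<1\}$; since $|z-1|<1$ forces $\Re z>0$, this disc lies in the slit plane $\C\setminus(-\infty,0]$ on which the principal logarithm is holomorphic. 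Hence $g_{2m+1}(X)=\log(\e^{-X}r_m(X))$ is defined \cite[Thm.~1.31]{high:FM}, and, being a primary matrix function of $\e^{-X}r_m(X)$ and therefore of $X$, it commutes with $X$.

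Finally I would derive the identity. From $\exp(\log M)=M$ for the principal logarithm we get $\exp\bigl(g_{2m+1}(X)\bigr)=\e^{-X}r_m(X)$, and since $g_{2m+1}(X)$ commutes with $X$ the exponential factors,
\[
\exp\bigl(X+g_{2m+1}(X)\bigr)=\e^{X}\exp\bigl(g_{2m+1}(X)\bigr)=\e^{X}\e^{-X}r_m(X)=r_m(X),
\]
using $\e^{X}\e^{-X}=I$. The only delicate points are the two translations from a spectral‑radius bound to a region of analyticity (zeros of $q_m$ avoided on $\Lambda(X)$, and the negative real axis avoided on $\Lambda(\e^{-X}r_m(X))$) together with the commutativity of $X$ and $g_{2m+1}(X)$ that licenses the factorization; each is immediate once the spectral mapping theorem is invoked, so I anticipate no serious obstacle.
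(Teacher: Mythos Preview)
Your argument is correct: the two spectral-radius hypotheses are used exactly as you describe (one to make $q_m(X)$ invertible and hence $r_m(X)$ a primary function of $X$, the other to place the eigenvalues of $\e^{-X}r_m(X)$ in the open disc $|z-1|<1\subset\C\setminus(-\infty,0]$ so that the principal logarithm applies), and the commutativity of $X$ with $g_{2m+1}(X)$ then yields the factorisation $\e^{X+g_{2m+1}(X)}=\e^X\e^{g_{2m+1}(X)}=r_m(X)$. Note, however, that the paper does not supply its own proof of this theorem: it is stated as a recalled result from \cite[sect.~3]{alhi09a}, so there is no in-paper argument to compare against; your write-up is essentially the standard justification one finds in that reference.
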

If $X$ is not in $\Omega^{(n)}_m$, choose $s$ so that $2^{-s}X \in
\Omega^{(n)}_m$. Then
\begin{equation}
r_m(2^{-s}X)^{2^s} = \e^{X+ 2^s g_{2m+1}(2^{-s}X)} =: \e^{X+\D X}
\label{be.eq}
\end{equation}
and the matrix $\D X = 2^s g_{2m+1}(2^{-s}X)$ represents the backward error associated with approximating $\e^X$ using the scaling and squaring method via \Pant s.
Over $\Omega^{(n)}_m$, the functions $g_{2m+1}$ have power series
expansions
\begin{equation}\label{power.h2m}
g_{2m+1}(X) = \sum_{k=0}^\infty b_{m,k}@@X^{2(m+k)+1}.
\end{equation}
Applying the operator $\cL$ to the sides of \eqref{be.eq} and using \eqref{cL.approx}, we obtain
\begin{eqnarray}
  \cL_{z^{2^s}}\bigl(X_0,Y_0,D_0\bigr) &=& \cL_{\exp\left(z+ 2^s g_{2m+1}(2^{-s}z)\right)}(A,B,E) \nonumber\\
  &=:& \cL_{\exp}(A+\D A,B+\D B,E+\D E),
\end{eqnarray}
where 
\[
\D A \!=\! 2^s g_{2m+1}(2^{-s}\!A),\quad \D B\! =\! 2^s g_{2m+1}\!(2^{-s}B),\quad
\D E\! = \cL_{g_{2m+1}}(2^{-s}\!A,2^{-s}B,E).
\]
These represent the backward errors corresponding to the approximations in \eqref{Li-exp}.
Since $A$ and $B$ are independent, let $X$ denote either $A$ or $B$.
Then the relative backward errors can be bounded~\cite{alhi09a} as
\begin{eqnarray}
  \frac{ \norm{\D X} }{ \norm{X} } &=& \frac{ \norm{ g_{2m+1}(2^{-s}X)} }
                                              { \norm{2^{-s}X} } \nonumber
             \label{berr-bound2} \\%
                                   &\le& \frac{\Ht_{2m+1}(\norm{2^{-s}X})}{\norm{2^{-s}X}},
\end{eqnarray}
where $\Ht_{2m+1}(z) = \sum_{k=0}^\infty |b_{m,k}|@@z^{2(m+k)+1}$.

While $\D E$ depends on $A$, $B$, and $E$, the relative backward error $\norm{\D E}/\norm{E}$, for any subordinate matrix norm, can be bounded \emph{independently} of $E$ as proven in the following theorem.
\begin{theorem}
\label{Thm.DE}
Suppose that $A\in\C^{\nbyn}$, $B\in\C^{d\times d}$, and $s$ is a nonnegative integer such that $2^{-s}\!A\!\in\!\Omega^{(n)}_m$ and $2^{-s}B\in\Omega^{(d)}_m$. Then
the backward error $\D E\! =\! \cL_{g_{2m+1}}\!(2^{-s}\!A,2^{-s}B,E)$ satisfies the bound
\begin{equation}
\label{DE.bound}
\frac{ \norm{\D E} }{ \norm{E} } \le
\Htmd\bigl(2^{-s}\max(\norm{A},\norm{B})\bigr)
\end{equation}
for any subordinate matrix norm.
\end{theorem}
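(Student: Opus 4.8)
The plan is to substitute the power series of $g_{2m+1}$, apply the operator $\cL$ term by term, evaluate each resulting monomial with the product rule, and then bound norms termwise, using the elementary inequality $\sum_{j=0}^{\ell-1} a^{j} b^{\ell-1-j} \le \ell\,\max(a,b)^{\ell-1}$ to replace the two block norms by a single maximum.

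First I would expand $\Delta E$. Since $2^{-s}A\in\Omega^{(n)}_m$ and $2^{-s}B\in\Omega^{(d)}_m$, the power series \eqref{power.h2m} for $g_{2m+1}$ converges on a neighbourhood of the union of the spectra of $2^{-s}A$ and $2^{-s}B$, so in the Cauchy-integral definition \eqref{Lab.operator} of $\cL_{g_{2m+1}}(2^{-s}A,2^{-s}B,E)$ one may take the contour $\Gamma$ inside the disc of convergence, where $\sum_k b_{m,k}z^{2(m+k)+1}$ converges uniformly; interchanging the sum with the integral (equivalently, applying the sum rule of Lemma \ref{lem.prop} to the series) gives
\[
\Delta E \;=\; \sum_{k=0}^{\infty} b_{m,k}\,\cL_{z^{2(m+k)+1}}(2^{-s}A,2^{-s}B,E).
\]
Next, writing $\ell=2(m+k)+1$ and $M_\ell=\cL_{z^{\ell}}(2^{-s}A,2^{-s}B,E)$, repeated use of the product rule on $z^{\ell}=z\cdot z^{\ell-1}$ together with $M_1=E$ yields, by a one-line induction, the closed form $M_\ell=\sum_{j=0}^{\ell-1}(2^{-s}A)^{j}\,E\,(2^{-s}B)^{\ell-1-j}$.

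Taking any subordinate matrix norm and using submultiplicativity, $\norm{M_\ell}\le \norm{E}\sum_{j=0}^{\ell-1}\norm{2^{-s}A}^{j}\norm{2^{-s}B}^{\ell-1-j}$. With $\theta:=2^{-s}\max(\norm{A},\norm{B})$, each summand is at most $\theta^{\ell-1}$, hence $\norm{M_\ell}\le \ell\,\theta^{\ell-1}\norm{E}$. Summing over $k$ and using that termwise differentiation of $\Ht_{2m+1}(z)=\sum_{k}|b_{m,k}|z^{2(m+k)+1}$ produces exactly $\Htmd(z)=\sum_{k}|b_{m,k}|(2(m+k)+1)z^{2(m+k)}$, one obtains
\[
\norm{\Delta E}\;\le\;\norm{E}\sum_{k=0}^{\infty}|b_{m,k}|\,(2(m+k)+1)\,\theta^{2(m+k)}\;=\;\norm{E}\,\Htmd(\theta),
\]
which is \eqref{DE.bound} after dividing by $\norm{E}$. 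As a consistency check, when $A=B$ this collapses, via the identity $\cL_f(A,A,E)=L_f(A,E)$ of Lemma \ref{lem.prop}, to the \FD\ backward-error bound of Al-Mohy and Higham.

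The step I expect to be the real obstacle is the first one: rigorously justifying that $\cL$ may be applied termwise to the infinite series for $g_{2m+1}$. This requires the scalar series to converge not merely formally but on a region large enough to contain a contour around the two scaled spectra, and it is here that the hypothesis $2^{-s}A\in\Omega^{(n)}_m$, $2^{-s}B\in\Omega^{(d)}_m$ — which controls the spectral radii through $\mu_m$ — must be tied to the radius of convergence of $g_{2m+1}$ so that uniform convergence on $\Gamma$, and hence the interchange of sum and integral, is legitimate. Everything afterward (the monomial formula for $M_\ell$, the $\max$ inequality, and termwise differentiation of a power series inside its disc of convergence) is routine.
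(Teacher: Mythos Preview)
Your argument is correct, but it is not the route the paper takes. The paper proves the bound in one stroke by invoking the block identity \eqref{L.block.form}: writing
\[
\Delta E \;=\; \text{(1,2) block of }\, L_{g_{2m+1}}\!\left(2^{-s}\begin{bmatrix}A&0\\0&B\end{bmatrix},\begin{bmatrix}0&E\\0&0\end{bmatrix}\right),
\]
it then applies \cite[Thm.~3.2]{alhi09}, which already gives $\norm{L_f(X,Z)}\le\norm{Z}\,\widetilde f'(\norm{X})$ for any power series $f$; the statement \eqref{DE.bound} drops out once one notes that for a subordinate norm the block-diagonal matrix has norm $\max(\norm{A},\norm{B})$ and the strictly upper block has norm $\norm{E}$. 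Your expansion of $M_\ell=\sum_{j=0}^{\ell-1}(2^{-s}A)^{j}E(2^{-s}B)^{\ell-1-j}$ followed by $\sum_j a^jb^{\ell-1-j}\le\ell\max(a,b)^{\ell-1}$ is exactly what the proof of \cite[Thm.~3.2]{alhi09} does, specialised to this block structure; so you are essentially reproving that theorem in situ rather than citing it. The paper's reduction is shorter and makes clear that nothing new is happening beyond the Fr\'echet-derivative case, while your direct argument is self-contained and, as a bonus, never needs to say what norm one is placing on $(n+d)\times(n+d)$ matrices---you work only with $A$, $B$, $E$ in their native spaces. The convergence issue you flag as the ``real obstacle'' is present in both approaches (it is buried inside the cited theorem in the paper's version) and is handled in the same standard way.
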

 \begin{proof}
The result follows from applying \cite[Thm.~3.2]{alhi09} on the equation in \eqref{L.block.form} for $f=g_{2m+1}$. Thus, we have
\begin{eqnarray*}
  \norm{\D E} &=& \left\|\begin{bmatrix}
         0  &  \cL_{g_{2m+1}}(2^{-s}A,2^{-s}B,E) \\ 0 & 0
\end{bmatrix}\right\| \\
              &=&\left\|L_{g_{2m+1}}\left(2^{-s}\begin{bmatrix}
         A  &  0 \\ 0 & B
\end{bmatrix},\begin{bmatrix}
         0  &  E \\ 0 & 0
\end{bmatrix}\right)\right\|\\
             &\le& \left\|\begin{bmatrix}
         0  &  E \\ 0 & 0
\end{bmatrix} \right\|@@ \Htmd\left( \left\|2^{-s}\begin{bmatrix}
         A  &  0 \\ 0 & B
\end{bmatrix} \right\|\right)\\
            &=& \norm{E}@@\Htmd\bigl(2^{-s}\max(\norm{A},\norm{B})\bigr).
\end{eqnarray*}
 \end{proof}

Define \cite[sect.~6]{alhi09}
\begin{equation}
 \theta_m = \max\{\,z : \Ht_{2m+1}(z)/z \le u \,\},\qquad
 \ell_m = \max\{\,z :
\Htmd(z) \le u\,\},
  \label{th-m2}
\end{equation}
 where $u  = 2^{-53} \approx 1.1\times
10^{-16}$ is the unit roundoff for IEEE double precision
arithmetic.
For $m=1\colon 20$, Al-Mohy and Higham evaluate $\theta_m$ and $\ell_m$ using high precision arithmetic and tabulate them in \cite[Table~6.1]{alhi09}. They observe that $\ell_m<\theta_m$ for all $m$. Thus, if the scaling parameter $s$ is chosen so that $2^{-s}\max(\norm{A},\norm{B})\le\ell_m$,
then the approximations of $\e^A$, $\e^B$, and $\cL_{\exp}(A,B,E)$, using the scaling and squaring method with diagonal \Pant s via the recurrence \eqref{Li-exp}, produce relative backward errors
$\norm{\D A}/\norm{A}$, $\norm{\D B}/\norm{B}$, and $\norm{\D E}/\norm{E}$
that never exceed $u$ in exact arithmetic.

The current version of the MATLAB function \expm\ implements the \alg\ of Al-Mohy and Higham \cite[Alg.~6.1]{alhi09a}; it has been in place since the release of version R2015b. The \alg\ uses the scaling and squaring method with \Pant s. The key features of the Al-Mohy and Higham \alg\ are twofold:
\begin{enumerate}
  \item The relative \be\ bound in \eqref{berr-bound2} is evaluated in terms of the sequence $\a_p(2^{-s}X)$ instead of $\norm{2^{-s}X}$, where
      \begin{equation}
        \a_p(X) = \max\bigl( \norm{X^{2p}}^{1/(2p)},\norm{X^{2p+2}}^{1/(2p+2)}
                    \bigr)
                    \label{def-alphap}
      \end{equation}
      and $p$ is chosen to minimize $\a_p(X)$ subject to $2m \ge 2p@(p-1)$ \cite[Eq.~(5.1b)]{alhi09a}.
      We have $\a_p(X)\le\norm{X}$, but $\a_p(X)$ can indeed be much smaller than $\norm{X}$ for highly nonnormal matrices. As a result, the \alg\ may select a smaller scaling parameter $s$.

  \item If the input matrix is (quasi)-triangular, as naturally obtained from the Schur decomposition, the \alg\ replaces the $2\times 2$ blocks in the squaring phase with the exponentials of their scaled diagonal counterpart blocks from the input matrix using exact formulas \cite[sect.~2]{alhi09a}.
\end{enumerate}
These two features increase the efficiency and accuracy of the scaling and squaring method, addressing a long-standing problem known as the \emph{overscaling phenomenon}. This problem was observed by Kenney and Laub for the matrix logarithm and the matrix exponential \cite{kela98}, and later investigated by Dieci and Papini \cite{dipa02}.

The question now is whether we can implement these points effectively. The answer is both yes and no. Unlike in \eqref{berr-bound2}, it remains unclear how to bound  the relative \be\ in \eqref{DE.bound} in terms of $\a_p(A)$ and $\a_p(B)$.
However, we can still benefit from the idea in point 2 above by applying Schur decomposition to transform the matrices $A$ and $B$ into (quasi-)triangular forms. Subsequently, we undo the Schur decomposition using the second point of Lemma \ref{lem.prop}.

We are now ready to describe and formulate our algorithm.
\section{New and existing algorithms}
\label{sect.algs}
In this section, we present our algorithm for simultaneously computing the matrices $\e^A$, $\e^B$, and $\cL_{\exp}(A, B, E)$ using the scaling and squaring method, where the selection of the scaling parameter is based on our backward error analysis. First, we describe and present our algorithm; then, we review the existing algorithm of Kenney and Laub \cite{kela98}.
\subsection{New algorithm}
There are several points and techniques we would like to emphasize.

First, in view of \eqref{DE.bound}, the norm of the matrix $E$ plays no role in the selection of the scaling parameter $s$ when computing $\e^A$, $\e^B$, and $\cL_{\exp}(A, B, E)$ via the scaling and squaring method. This is in contrast to evaluating them using the block matrix as in \eqref{block.def}, where the scaling parameter depends on the norm of the entire block matrix. Consequently, if $\norm{E} \gg \max(\norm{A}, \norm{B})$, a larger value of the scaling parameter may be selected. This can lead to overscaling the diagonal block matrices, affecting the accuracy of the scaling and squaring method.

Second, even though $E$ has no contribution in the selection of the scaling parameter, overscaling one component of the diagonal block matrices is still possible. For instance, if $\norm{A}$ is large and $\norm{A} \gg \norm{B}$, then $B$ would be overscaled. Thus, computing $\e^B$ by $s$ times repeated squaring of $Y_0 = r_m(2^{-s}B)$ in \eqref{Li-exp} could result in a loss of accuracy. To address this issue, we suggest using the Schur decomposition in a specific setting described below.
\begin{algorithm}
\caption{Scaling and Squaring Algorithm for the Exponential of a Block  
Triangular Matrix. \emph{For $A\in\C^{n\times n}$, $B\in\C^{d\times d}$, and $E\in\C^{n\times d}$, this algorithm computes $X\approx\e^A$, $Y\approx\e^B$, and
$D\approx\cL_{\exp}(A,B,E)$ simultaneously by the scaling and squaring method with Pad\'e approximants.
The parameters $\ell_m$ are listed in \textnormal{\cite[Table~6.1]{alhi09}}.
The algorithm is intended for IEEE double precision arithmetic.}}
\label{alg.expmF}
\begin{algorithmic}[1]
%
\State $\eta = \max(\normi{A},\normi{B})$
\For{$m \in \{3,5,7,9\}$}
    \If{$\eta \le \ell_m$}
        \State Compute $U_a=u_m(A)$, $V_a=v_m(A)$, $U_b=u_m(B)$, $V_b=v_m(B)$ via \eqref{p-odd-eval}
        \State Compute $D_u=\cL_{u_m}(A,B,E)$, $D_v=\cL_{v_m}(A,B,E)$ via \eqref{sch_L_u}, \eqref{sch_L_v}
        \State $s =0$, \textbf{goto} \ref{line.mUVa}.
    \EndIf
\EndFor

\State $s = \lceil \log_2(\eta/\ell_{13})\rceil$ \Comment{Smallest integer with $2^{-s}\eta\le\ell_{13}$}
\State If $s\ge10$, compute Schur decomposition for $A$ and $B$; update $A$, $B$, and $E$.

\State $A \gets 2^{-s}A$, $B \gets 2^{-s}B$, $E \gets 2^{-s}E$

\State $A_2 = A^2$, $A_4 = A_2^2$, $A_6 = A_2A_4$
\State $B_2 = B^2$, $B_4 = B_2^2$, $B_6 = B_2B_4$

\State $M_2 = AE+EB$, $M_4 = A_2M_2+M_2B_2$, $M_6 = A_4M_2+M_4B_2$

\State $W_{1a} = w_1(A)$, $W_{1b} = w_1(B)$ \Comment{using \eqref{m13.scheme}}
\State $W_{2a} = w_2(A)$, $W_{2b} = w_2(B)$
\State $Z_{1a} = y_1(A)$, $Z_{1b} = y_1(B)$
\State $Z_{2a} = y_2(A)$, $Z_{2b} = y_2(B)$

\State $W_a = A_6W_{1a}+W_{2a}$, $W_b = B_6W_{1b}+W_{2b}$
\State $U_a = AW_a$, $U_b = BW_b$
\State $V_a = A_6Z_{1a}+Z_{2a}$, $V_b = B_6Z_{1b}+Z_{2b}$

\State $D_{w_1} = c_{13}M_6+c_{11}M_4+c_9M_2$
\State $D_{w_2} = c_7M_6+c_5M_4+c_3M_2$
\State $D_{y_1} = c_{12}M_6+c_{10}M_4+c_8M_2$
\State $D_{y_2} = c_6M_6+c_4M_4+c_2M_2$
\State $D_w     = A_6D_{w_1}+M_6W_{1b}+D_{w_2}$
\State $D_u     = AD_w+EW_b$
\State $D_v     = A_6D_{y_1}+M_6Z_{1b}+D_{y_2}$
\State Solve $(-U_a+V_a)X = U_a+V_a$ for $X$
\label{line.mUVa} 
\State Solve $(-U_b+V_b)Y = U_b+V_b$ for $Y$
\label{line.mUVb} 

\State \parbox[t]{\dimexpr\linewidth-\algorithmicindent}{
\vspace{-5.0ex}
\begin{minipage}[t]{\linewidth}
\begin{flalign*}
\text{Solve for } D &\text{ either}\hspace{-5.0em} & (-U_a+V_a)D &= D_u+D_v+(D_u-D_v)Y, &\\
                     &\text{ or}\hspace{-5.0em}    & D(-U_b+V_b) &= D_u+D_v+X(D_u-D_v) &
\end{flalign*}
\end{minipage}}
\label{line.mUVL}
\For{$k = 1$ to $s$}
    \State $D \gets XD + DY$
    \If{$X$ or $Y$ is (quasi-)triangular}
        \State Apply \cite[Code Fragments~2.1--2.2]{alhi09a}
    \Else
        \State $X \gets X^2$, $Y \gets Y^2$
    \EndIf
\EndFor
\end{algorithmic}
\end{algorithm}

Algorithm~\ref{alg.expmF} is transformation-free in nature. However, we may use Schur decomposition to reduce the computational cost and enhance the stability of the squaring phase. For simplicity, we assume that $A$, $B$, and $E$ are full square matrices of dimension $n$. The algorithm involves $25 + 4s$ matrix multiplications and two LU factorizations of the coefficient matrices in lines~\ref{line.mUVa} and~\ref{line.mUVb}, used to solve four triangular systems in those lines, plus two additional triangular solves in line~\ref{line.mUVL}. We use the operation counts in flops as tabulated by Higham \cite[App.C]{high:FM}.
The total operation count is about $(172 + 24s)n^3/3$ flops. If we reduce $A$ and $B$ to upper triangular forms using Schur decomposition, which typically costs $25n^3$ flops for each, while $E$ is full, the total operation count is about $(248 + 8s)n^3/3$ flops, which is greater than the operation count of the transformation-free form for $0 \le s \le 4$. As $s$ increases, the operation count suggests the use of Schur decomposition, allowing the algorithm to exploit the (quasi-)triangularity to enhance the stability of the squaring phase. If $A$ and $B$ are real matrices, we compute the real Schur decomposition to keep the operations in real arithmetic.
Thus, for stability and efficiency reasons, we design our algorithm to perform Schur decomposition if the scaling parameter $s$ exceeds 9; that is, if $\eta \ge 2^{10}\ell_{13} \approx 4.85\times10^3$.

If we use Algorithm~\ref{alg.expmF} to compute the matrix exponential for triangular matrices as proposed in \eqref{part.k}, the algorithm will be more efficient if we choose $k$ that minimizes $\max(\norm{A_k}, \norm{B_k})$.
\subsection{Existing algorithm}
Kenney and Laub \cite[sect.~3.3]{kela98} develop an algorithm, 
known as the Schur--Fr\'echet algorithm for computing the exponential. This algorithm is specifically designed for upper triangular matrices partitioned as in \eqref{block.def}, where the exponentials of the diagonal blocks are computed independently, and the off-diagonal blocks are computed by exploiting the integral formula \cite[Eq.~(10.40)]{high:FM} associated with the operator
$\cL_{\exp}(A,B,E)$:
\begin{equation}\label{int.cL}
\cL_{\exp}(A,B,E)=\int_{0}^{1} \e^{tA}E@@\e^{(1-t)B}\dt t.
\end{equation}
Their approach approximates the integral by using the Kronecker sum representation of the operator \cite[sect.~2.2]{kela98}, under the assumption that $A=B$. By analogy with their analysis and \cite[Thm.~10.13]{high:FM}, applying the $\vec$ operator to the sides of \eqref{int.cL} yields 
\begin{align}
  \vec\bigl(\cL_{\exp}(A,B,E)\bigr) &= \nonumber
  \int_{0}^{1}\left(\e^{tA^T}\otimes\e^{(1-t)B}\right)\vec(E)\dt t\\
     &= \frac{1}{2}\bigl(\e^{A^T}\oplus\e^{B}\bigr)
     \tau\bigl(\frac{1}{2}[A^T\oplus(-B)]\bigr)\vec(E), \label{tau.result}
\end{align}
where $\otimes$ and $\oplus$ denote the Kronecker product and sum, \resp, 
\cite[App.~B13]{high:FM} and
$\tau(z) = \tanh(z)/z$. The integral result \eqref{tau.result} is valid if $\norm{A^T\oplus(-B)}_2<\pi$, which in turn holds if the value $\eta:=\max(\normF{A},\normF{B})<\pi/2$ \cite[Thm.~2.1]{kela98}. Using the [8/8] \Pant, $r_8(z)$, to $\tau(z)$ factored
in the form
\begin{equation*}
  \tau(z)\approx r_8(z)=\prod_{j=1}^{8}\frac{1-z/\alpha_j}{1-z/\beta_j},
\end{equation*}
they evaluate $R_8$ such that $\vec(R_8)=r_8\left(\frac{1}{2}[A^T\oplus(-B)]\right)\vec(E)$
by solving the following sequence of Sylvester equations:
\begin{align}
  R_0 &= E \nonumber\\
  \left(I_n+\frac{A}{\beta_j}\right)R_j+R_j\left(I_d-\frac{B}{\beta_j}\right)
   &=
   \left(I_n+\frac{A}{\alpha_j}\right)R_{j-1}+R_{j-1}\left(I_d-\frac{B}{\alpha_j}\right) \label{sylv.cascade}
\end{align}
for $j=1\colon8$. Therefore,
\begin{equation}\label{cL.r8.approx}
\cL_{\exp}(A,B,E)\approx \frac{1}{2}(\e^AR_8+R_8\e^B).
\end{equation}
Due to the restriction $\norm{A^T\oplus(-B)}_2<\pi$ because $\tanh(z)$ has poles with
$|z|=\pi/2$ and the fact that the quality of the approximation \eqref{cL.r8.approx} depends on the quality of the approximation of
$\tau(z)$ by $r_8(z)$, Kenney and Laub propose scaling $A\gets 2^{-s}A$ and $B\gets 2^{-s}B$ so that $2^{-s}\eta\le 1/4$ to control the \cn\ of the Sylvester equations above and the following forward error
%
\begin{equation*}
\norm{\tau(2^{-s}X)-r_8(2^{-s}X)}_2\le g(\norm{2^{-s}X}_2)\le g(\eta)\le g(1/4)
\approx 6.85\times10^{-28},
\end{equation*}
where $X=\frac{1}{2}[A^T\oplus(-B)]$ and $g(z)=\tau(\1i z)-r_8(\1i z)$ by \cite[Thm.~2.3]{kela98}. If $s=0$, then the result follows from \eqref{cL.r8.approx}; otherwise, the algorithm invokes the ``top down square root Sylvester cascade":
\begin{algorithmic}[1]
\State Compute $D = R_8$ using \eqref{sylv.cascade} with $A \gets 2^{-s}A$ and $B \gets 2^{-s}B$
\State $X = \e^{A/2}$, $Y = \e^{B/2}$ \label{line1}
\Comment{unscaled $A$ and $B$}
\For{$j = 1$ \textbf{to} $s$}
    \State $D \gets XD + DY$
    \If{$j < s$}
        \State $X \gets \e^{A/2^{j+1}}$, $Y \gets \e^{B/2^{j+1}}$ \label{lin.rf} \Comment{recursive form~~~~~~~}
        \State \textbf{or}
        \State $X \gets X^{1/2}$, $Y \gets Y^{1/2}$ \label{lin.srf}
        \Comment{semirecursive form }        
    \EndIf
\EndFor
\State $D \gets XD + DY$
\State $\cL_{\exp}(A,B,E) \approx D/2^{s+1}$
\end{algorithmic}
The authors propose two forms of their Schur--Fr\'echet algorithm: the ``recursive form", where the algorithm invokes separate calculations of the
subsequent exponentials $\e^{A/2^{j+1}}$ and $\e^{B/2^{j+1}}$ in line \ref{lin.rf} above, or the ``semirecursive form", in which the exponentials in step $j+1$ are treated as square roots of the exponentials computed in step $j$.
The reason of the recursive form is to mitigate the overscaling problem by computing the exponentials independently at each iteration. Recall that $s$ is the scaling parameter for the approximation of $\tau(z)$ by $r_8(z)$; it is not that used by the scaling and squaring method for the matrix exponential.
Therefore, any algorithm for the matrix exponential can be applied here, such as the general-purpose MATLAB function \funm\ or the well-known function \expm. The scaling parameter $s$ is independent of the scaling parameters selected by \expm\ for $A$ and $B$. For example, the matrices
 \[
 A = \begin{bmatrix}
         -2\times 10^3  &  10^7 \\ 0 & 3
\end{bmatrix},\qquad
B = \begin{bmatrix}
         0  &  2 \\ 1 & 1
\end{bmatrix}
\]
 have $\eta \approx 10^7$ and hence $s=26$ whereas the \expm\ scaling parameters
 for $A$ and $B$ in line \ref{line1} are 10 and 0, \resp.
 
The recursive form has several advantages. First, the independent evaluation of the exponentials in each iteration reduces the risk of overscaling. Second, it takes advantage of the latest developments in matrix exponential algorithms. However, the independent evaluation of the exponentials, as in line \ref{lin.rf}, significantly increases the cost of the algorithm. To address this, the authors proposed the semirecursive version to reduce computational costs especially when the matrices are triangular. However, they observe some instability in the repeated square root process due to underflow during the initial exponentiation in line \ref{line1}. In the example above, the $(1,1)$ element of $X=\e^{A/2}$ underflows to zero, making it irrecoverable when taking square roots of $X$. To overcome this problem, Kenney and Laub heuristically suggest not reverting from the recursive version to the semirecursive version (in line \ref{lin.srf}) unless $\eta/2^k\le 100$, where $1\le k<s$. The algorithm always invokes complex arithmetic even though all the input matrices are real. The reason is that the coefficients $\a_j$ and $\beta_j$ in \eqref{sylv.cascade} are complex numbers. In contrast, our algorithm's outputs and all subsequent operations are performed in real arithmetic, provided that all input matrices are real.

In fact, there is a major problem with the idea of the semirecursive form. We will first quote the authors and then comment on it. They say \cite[pp.~656]{kela98}:
\begin{quote}
 ``As a compromise between accuracy and efficiency we have modified the above algorithm to a ``semirecursive'' form: if $X_{ii}$ [scaled $A$ and $B$ in our notation] is not too large in norm ... replace the exponentiation $A_{ii}\gets\e^{X_{ii}/2}$ by the analytically equivalent square root operation $A_{ii}\gets A_{ii}^{1/2}$..., we replaced the exponentiation with the square root if the condition $\normF{X_{ii}} < \tol$ with $\tol=100$ was satisfied.''
 \end{quote}
This idea does not generally apply to all matrices. The identity $\e^{X/2}=(\e^{X})^{1/2}$ is guaranteed to hold if the imaginary parts of the eigenvalues of the matrix $X$ lie within the interval $(-\pi,\pi]$. Therefore, the semirecursive form has limited practical usefulness.
\section{Numerical experiment}
\label{sect.num}
This experiment consists of three parts for computing $\cL_{\exp}(A, B, E)$. All experiments are performed in MATLAB R2022b, and the algorithms are implemented as follows.
\begin{enumerate}
  \item Algorithm \ref{alg.expmF} proceeds without transformations unless the scaling parameter satisfies $s \ge 10$, in which case the Schur decompositions of $A$ and $B$ are computed.
  \item The algorithm of Kenney and Laub (KL) is implemented in its best performing setting: the recursive form with initial Schur decompositions of $A$ and $B$. The Sylvester equations \eqref{sylv.cascade} are solved using the MATLAB function \sylvester, and the matrix exponentials are computed using \expm.
  \item \expm\ is applied directly to the block matrix in \eqref{block.def}, and the $(1,2)$ block of the result is extracted.
  \item The ``exact'' value of $\cL_{\exp}(A, B, E)$, denoted by $D^{\refsol}$, is computed with 300-digit precision arithmetic using the algorithm of Fasi and Higham \cite{fahi19} applied to the block matrix in \eqref{block.def}, followed by extraction of the $(1,2)$ block. Their MATLAB implementation, \expmmp, utilizes the Advanpix Multiprecision Computing Toolbox \cite{adva-mct}.
\end{enumerate}

First, we construct a block upper triangular Hamiltonian matrix of the form \eqref{tri.Ham} to emulate a practical application scenario. We take $T$ as the Schur triangular factor obtained from the MATLAB function \t{gallery('invol',8)} and then negate its positive diagonal elements to ensure that $T$ is stable. We set $H=(C+C^T)/2$, where $C$ is \t{gallery('chebspec',8)}. Thus we consider the matrix
\begin{equation*}
\Tt_k=\begin{bmatrix}
        T  &  \a_kH \\ 0 & -T^T
\end{bmatrix},\quad \a_k = 2^{t_k},\quad t_k=200(k-3),\quad k=0\colon6.
\end{equation*}
We then compute the $(1,2)$ block of $\e^{T_k}$, which is 
$\cL_{\exp}(T, -T^T,\a_k H)=:D^{\refsol}(\a_k)$, by the four algorithms described above. Denoting the computed result by $\Dhat(\a_k)$, we evaluate the 2-norm relative error $\normt{D^{\refsol}(\a_k)-\Dhat(\a_k)}/\normt{D^{\refsol}(\a_k)}$ for each $k$.

The results in Table~\ref{table.err} clearly illustrate the comparative numerical stability and accuracy of the tested algorithms under varying levels of nonnormality introduced by the parameter $\alpha_k=2^{t_k}$.
For all $\alpha_k$, the relative errors produced by Algorithm \ref{alg.expmF} and the KL algorithm are totally unaffected by $\alpha_k$, though Algorithm \ref{alg.expmF} delivers highly accurate relative errors close to machine precision. In contrast, the KL algorithm exhibits relative errors of the order of $10^{-10}$.

The direct application of \expm\ to the block matrix yields satisfactory results for small values of $\alpha_k$, but its accuracy declines as $k$ increases. For the largest value tested, $\alpha_6 = 2^{600}$, the relative error rises rapidly to approximately $9.39 \times 10^{-2}$. This reduction in accuracy is primarily due to the $(1,2)$ block norm becoming arbitrarily large, leading to excessive scaling that overscales the diagonal blocks and adversely affects numerical stability. While this represents an extremely exaggerated scenario unlikely to arise in practice, \expm\ still outperforms the KL algorithm up to $k = 5$. Nevertheless, this behavior highlights the importance of Algorithm~\ref{alg.expmF} as a specialized and reliable method for this class of problems.

Furthermore, both Algorithm~\ref{alg.expmF} and the KL algorithm \emph{preserve the linearity} of the operator $\cL_{\exp}(A, B, E)$. This property holds exactly in exact arithmetic, as it is inherent in the derivation of both algorithms.
In finite precision arithmetic, linearity is also effectively preserved under the current experimental conditions. Notably, the parameter $\alpha_k$ is selected as an \emph{integer power} of 2, ensuring that the scaling operation $\alpha_k H$ introduces no rounding errors. This is a direct consequence of the fact that scaling by integer powers of 2 in IEEE floating-point arithmetic is exact.
Thus, we have $\Dhat(\alpha_k) = \alpha_k \Dhat(1)$, and the factor $\alpha_k$ cancels out in the relative error computation.

The algorithmic structure of Algorithm~\ref{alg.expmF} exploits the triangularity of the input matrices and selects the scaling parameter independently of the matrix $\alpha_k H$. Although the KL algorithm similarly does not rely on the norm of $\alpha_k H$ for scaling, its loss of accuracy is attributed to the nature of its algorithmic design rather than to a loss of linearity.
 \captionsetup[table]{skip=5pt, font=small}
 
 \begin{table}[htbp]
\centering
\caption{Relative errors in computing the (1,2) block of $\e^{T_k}$ for 
 $\a_k=2^{t_k}$, $t_k=200(k-3)$, and $k=0\colon6$. }
\label{table.err}
\begin{tabular}{
    S[table-format=-4.0]                
    S[table-format=1.3e-2, round-mode=figures, round-precision=4]  
    S[table-format=1.3e-2, round-mode=figures, round-precision=4]  
    S[table-format=1.3e-2, round-mode=figures, round-precision=4]  
}
\toprule
{$t_k$} & {Alg. 4.1} & {KL alg.} & {\expm} \\
\midrule
-600 & 9.916e-16 & 9.542e-09 & 2.968e-15 \\
-400 & 9.916e-16 & 9.542e-09 & 2.968e-15 \\
-200 & 9.916e-16 & 9.542e-09 & 2.968e-15 \\
   0 & 9.916e-16 & 9.542e-09 & 2.968e-15 \\
 200 & 9.916e-16 & 9.542e-09 & 7.634e-09 \\
 400 & 9.916e-16 & 9.542e-09 & 1.990e-09 \\
 600 & 9.916e-16 & 9.542e-09 & 9.391e-02 \\
\bottomrule
\end{tabular}
\end{table}

Second, we used the set of test matrices employed by Al-Mohy, Higham, and Liu \cite{ahl22}. This collection consists of nonnormal matrices selected from the Anymatrix collection of Higham and Mikaitis \cite{himi-am}, as well as from the matrix functions literature \cite{alhi09, ahr15, fahi19}. The collection is available on GitHub, maintained by Liu, at \url{https://github.com/Xiaobo-Liu/mp-cosm}. The function \testmats\ in that repository provides 99 test matrices.

We randomly selected matrices $A$ and $B$ from this set with dimensions 30 and 20, respectively, and paired them. Random permutations of the index set ${1, 2, \dots, 99}$ were generated using the MATLAB built-in function \t{randperm(99)}. A different random matrix $E$ was generated for each pair of $A$ and $B$ using \randn. For each triplet $(A, B, E)$, we computed 
$\cL_{\exp}(A, B, E)$, denoted by $\Dhat$, using Algorithm~\ref{alg.expmF}, KL algorithm, and \expm\ as described above.
\begin{figure}
  \centering
  \myfig{test_acc}
  \caption{Relative forward errors for the computation of $\cL_{\exp}(A,B,E)$
  by Algorithm \ref{alg.expmF}, Kenney-Laub's algorithm (KL), and \expm. The solid
  line represents the \cn, $\cond_{\cL_{\exp}}(A,B,E)$, multiplied by $u=2^{-53}$.}
  \label{fig.test1}
\end{figure}
%
\begin{figure}
  \centering
  \myfig{fig_pef}
  \caption{The data in Figure \ref{fig.test1} presented as a performance profile. }
  \label{fig.test2}
\end{figure}
\begin{figure}
  \centering
  \myfig{fig_lin}
  \caption{Relative errors for $\Dhat(\a) \approx \a \Dhat(1)$ with $\a = 1/\normi{E}$ using Algorithm~\ref{alg.expmF} and the KL algorithm. Data sorted as in Figure~\ref{fig.test1}.
 }
  \label{fig.test3}
\end{figure}

Figure \ref{fig.test1} shows the relative forward errors $\normi{D^{\refsol}-\Dhat}/\normi{D^{\refsol}}$ for each algorithm, sorted in descending order according to the condition number of the problem, $\cond_{\cL_{\exp}}$, defined below.
The solid black line represents the condition number multiplied by the unit roundoff. The condition number measures the forward stability of these algorithms; the algorithm is considered numerically stable if the relative forward error does not exceed $\cond_{\cL_{\exp}}u$.
We define the condition number as
\begin{align*}
\cond_{\cL_{\exp}}(A,B,E)\! =& \lim_{\eps\to 0}\!\sup_{\substack{\norm{\D A} \le \eps \norm{A}
 \\ \norm{\D B} \le \eps \norm{B} \\ \norm{\D E} \le \eps \norm{E}} }
\!\!\!\!\!\!\!\!\!\frac{\norm{\cL_{\exp}(A + \D A, B+\D B,E + \Delta E) - \cL_{\exp}(A,B, E)}}
{\eps\norm{\cL_{\exp}(A,B,E)}}.
\end{align*}
Exploiting the relation \eqref{L.block.form} between the operator $\cL_f$ and the block version of the \FD\ with some manipulation, we take as a condition number
\begin{equation*}
 \cond_{\cL_f}(A,B,E) = \cond_{L_f}\left(\begin{bmatrix}
         A  &  0 \\ 0 & B \end{bmatrix},
         \begin{bmatrix}
         0  &  E \\ 0 & 0 \end{bmatrix}\right),
\end{equation*}
where $\cond_{L_f}(X,Z)$ is the condition number of the \FD\ of the matrix function $f$ at $X$ in direction $Z$. Higham and Relton \cite{hire14b} propose an
algorithm estimating $\cond_{L_f}(X,Z)$ in the 1-norm. Their algorithm subsequently requires algorithms for computing the \FD\ $L_f(A,E)$ and estimating the \cn\ of the matrix function. We provide the algorithm of Higham and Relton with \cite[Algs.~6.4 \& 7.4]{alhi09} for $f=\exp$.  

Figure~\ref{fig.test1} clearly shows that all the algorithms exhibit forward stability, with Algorithm~\ref{alg.expmF} demonstrating the highest level of stability, followed closely by \expm\ applied to the $2 \times 2$ block matrix. More precisely, the forward relative errors produced by Algorithm~\ref{alg.expmF} lie below the solid line representing $\cond_{\cL_{\exp}} u$ for 95 problems, followed by 93 problems for \expm\ and 77 problems for the KL algorithm.

To better highlight the differences between the methods, Figure \ref{fig.test2} presents the data from Figure \ref{fig.test1} in the form of a performance profile \cite{dihi13,domo02}.
For a given $\a\geq 1$, each point on the curve represents the fraction $p$ of problems for which the method’s error is within a factor $\alpha$ of the best error achieved by any method.

The plot reveals several important insights. Algorithm~\ref{alg.expmF} exhibits the most favorable profile, with its curve rising rapidly and plateauing near $p = 0.95$, indicating that it achieves near-optimal accuracy on approximately 95\% of the problems.

\expm\ also performs competitively but plateaus at a lower level (around $p = 0.85$), suggesting that while it can deliver accurate results, it does not consistently match the reliability of Algorithm~\ref{alg.expmF}. Notably, \expm\ is unable to fully match the success rate of Algorithm~\ref{alg.expmF}, even when the error tolerance is relaxed to very large values of $\alpha$.

 The KL algorithm shows the slowest ascent, plateauing below $p = 0.7$. Matching the success rate of Algorithm~\ref{alg.expmF} requires a relaxation factor as large as $\alpha \approx 46$, highlighting its limited competitiveness in both accuracy and robustness. Table~\ref{table.pp} summarizes success rates at selected $\alpha$ values and the minimum $\alpha$ needed for each method to match Algorithm~\ref{alg.expmF}. A value of $\infty$ indicates the method never reaches this success rate.
 
Finally, concerning linearity preservation for scaling factors that are not integer powers of 2, we denote by $\Dhat(\a)$ the computed values of $\cL_{\exp}(A, B, \a E)$. Both $\Dhat(\a)$ and $\Dhat(1)$ are computed independently across the test matrices using Algorithm~\ref{alg.expmF}, and similarly using the KL algorithm. This setup verifies whether the computed $\Dhat(\a)$ satisfies the linearity relation $\Dhat(\a) = \a \Dhat(1)$, with $\a = 1 / \normi{E}$, a scaling factor not restricted to integer powers of 2.
Although this relation holds exactly in exact arithmetic, scaling by arbitrary real numbers introduces rounding errors in finite precision, which may affect its preservation. Figure~\ref{fig.test3} shows that both algorithms maintain linearity with high accuracy, as relative errors remain close to machine precision in most cases. A few larger errors appear but remain within acceptable range.
For completeness, we repeated the experiment with $\a$ restricted to integer powers of 2 and found that the relation $\Dhat(\a) = \a \Dhat(1)$ holds \emph{identically}, confirming that rounding errors are entirely avoided in this case.
\begin{table}[htbp]
\centering
\caption{Percentage of problems with error within a factor $\alpha$ of the best. The last column gives the minimum $\alpha$ needed to match Algorithm~\ref{alg.expmF}’s success rate.}

\label{table.pp}
\small  
\begin{tabular}{lccc}
\toprule
Method &  $\alpha=2$ (\%) & $\alpha=5$ (\%) & $\alpha$ to match Alg.~\ref{alg.expmF} \\
\midrule
Algorithm~\ref{alg.expmF} & 78 & 89 & -- \\
\expm & 66 & 80 & $\infty$ \\
KL Algorithm & 24 & 51 & 46 \\
\bottomrule
\end{tabular}
\end{table}

The experiments confirm that Algorithm~\ref{alg.expmF} delivers superior accuracy and efficiency compared to both the Kenney and Laub algorithm and \expm, making it a strong candidate for computing the matrix exponential of block triangular matrices.

The MATLAB codes for Algorithm~\ref{alg.expmF} and the Kenney and Laub algorithm are available at \url{https://github.com/aalmohy/expm_block_tri}.

\section{Concluding remarks}
We propose a new algorithm for computing the exponential of block triangular matrices that simultaneously computes the blocks of the matrix exponential without directly involving the full matrix.
We have introduced a linear operator $\cL_f(A, B, E)$, generalizing the Fr\'echet derivative of matrix functions. We explored its algebraic properties and demonstrated how they facilitate the construction of a computational framework for evaluating this operator, specifically for the matrix exponential. Our rigorous backward error analysis not only ensures high accuracy but also inherently promotes computational efficiency through the optimal selection of scaling parameters using sharp bounds. Our numerical experiments demonstrate the high accuracy of the proposed algorithm, while the theoretical analysis highlights its computational advantages over existing methods.

Moreover, we highlighted several practical applications of $\cL_{\exp}$, including its role in exponential integrators for solving systems of stiff or highly oscillatory ordinary differential equations and in computing matrix exponentials for Hamiltonian matrices in control theory. Additionally, we discussed its potential in option pricing models based on polynomial diffusions, where it facilitates the computation of exponentials of recursively nested block upper triangular matrices. Algorithm~\ref{alg.expmF} is a general-purpose method; however, it can serve as a foundation for developing specialized and more efficient algorithms depending on the application and the structure of the inputs.

An open question we raise is whether the bound in Theorem~\ref{Thm.DE} can be expressed in terms of the quantities $\a_p(A)$ and $\a_p(B)$, defined in \eqref{def-alphap}, instead of matrix norms. Achieving this would significantly reduce the scaling parameters and, consequently, the overall computational cost. It is also worth investigating whether the input matrices $A$ and $B$ can be scaled independently using different scaling parameters, which may lead to further efficiency gains and improved numerical behavior.

Finally, future research could explore extending the use of $\cL_f$ to directly compute matrix functions, particularly by employing the Schur decomposition. This provides a promising direction for simplifying and enhancing various computational challenges in scientific computing and numerical methods.
\label{sect.conc}
\section*{Acknowledgments}  
We thank the editor for their professional handling of the review process and the reviewers for their insightful comments and suggestions, which helped improve the presentation of this paper.

 \bibliographystyle{siamplain}
\bibliography{references}


\end{document}